\theoremstyle{plain}
\newtheorem{thm}{Theorem}[section]
\newtheorem{lemma}[thm]{Lemma}
\newtheorem{claim}[thm]{Claim}
\newtheorem{coroll}[thm]{Corollary}
\theoremstyle{definition}
\newtheorem{constr}{Construction}[section]
\newtheorem{conj}{Conjecture}
\theoremstyle{remark}
\def\inte{1{\text{\rm -int}}}
\def\lii{01{\text{\rm -int}}}
\def\cro{1}
\def\width{thickness }
\def\restricted{bounded }
\def\WWW{{\mathcal{H}}}   % (3 darab)
\def\qed{\nobreak\hfill $\Box$\par\smallskip}%
\def\AA{{\mathcal{A}}}%
\def\BB{{\mathcal{B}}}%
\def\CC{{\mathcal{C}}}%
\def\RR{{\mathbb{R}}}%
\def\cL{{\mathcal{L}}}%
\def\SS{{ (\mathcal{A},\mathcal{B})}}%
\def\HH{{\mathcal{H}}}%
\def\beq{\begin{equation}}\def\eeq{\end{equation}}
\title{Problems and results on 1-cross intersecting set pair systems}
\author{Zolt\'an F\"uredi\thanks{Research was supported in part by NKFIH grant KH130371 and
  NKFI--133819.}, \,
Andr\'as Gy\'arf\'as\thanks{Research was supported in part by
NKFIH grant K116769.}\\[-0.8ex]
\small Alfr\'ed R\'enyi Institute of Mathematics\\[-0.8ex]
%%% \small Hungarian Academy of Sciences\\[-0.8ex]
\small P.O. Box 127\\[-0.8ex]
\small Budapest, Hungary, H-1364\\[-0.8ex] \small
\texttt{furedi.zoltan@renyi.hu, gyarfas.andras@renyi.hu} \and Zolt\'an
Kir\'aly\thanks{This research was partially supported by the Hungarian
  National Research, Development and Innovation Office, OTKA grant no. FK
  132524 and by Dynasnet European Research Council Synergy project (ERC-2018-SYG 810115).} \\[-0.8ex]
\small ELTE E\"otv\"os Lor\'and University \\[-0.8ex]
\small Department of Computer Science\\[-0.8ex]
\small P\'azm\'any P\'eter s\'et\'any 1/C \\[-0.8ex]
\small Budapest, Hungary, H-1117\\[-0.8ex]
\small and Alfr\'ed R\'enyi Institute of Mathematics\\[-0.8ex]
\small \texttt{kiraly@cs.elte.hu}\\
}
\begin{document}
\pagestyle{myheadings} \markright{{\small{\sc F\"uredi, Gy\'arf\'as, Kir\'aly:  1-cross intersecting set pair systems
}}}
\maketitle

\begin{abstract}
The notion of cross intersecting set pair system of size $m$, $\Big(\{A_i\}_{i=1}^m, \{B_i\}_{i=1}^m\Big)$ with
$A_i\cap B_i=\emptyset$ and $A_i\cap B_j\ne\emptyset$, was introduced by %comes from a paper of
Bollob\'as and it became an important tool of extremal combinatorics. % graph and set theory.
% The basic
His classical result % of Bollob\'as
states that $m\le {a+b\choose a}$ if $|A_i|\le a$ and $|B_i|\le b$ for each $i$.

Our central problem is to see how this bound changes %improves
 with the additional condition $|A_i\cap B_j|=1$ for $i\ne j$.
% We call
Such a % set pair
system is called $1$-cross intersecting. We show that these systems are related to perfect graphs, clique partitions of graphs, and finite geometries.
We prove that their maximum size is
\begin{itemize}

\item at least $5^{n/2}$ for $n$ even, % $2\cdot 5^{(n-1)/2}$ for $n$ odd
 $a=b=n$,

\item  equal to
  $\bigl(\lfloor\frac{n}{2}\rfloor+1\bigr)\bigl(\lceil\frac{n}{2}\rceil+1\bigr)$
  if $a=2$ and $b=n\ge 4$,

\item at most $|\cup_{i=1}^m A_i|$,

\item  asymptotically $n^2$ if $\{A_i\}$ is a linear hypergraph ($|A_i\cap A_j|\le 1$ for $i\ne j$),

\item  asymptotically ${1\over 2}n^2$ if $\{A_i\}$ and $\{B_i\}$ are both
  linear hypergraphs.

\end{itemize}

\end{abstract}

\section{Introduction, results}

The notion of cross intersecting set pair systems was introduced by % comes from a paper of
Bollob\'as~\cite{BO} and it became a standard tool of extremal set theory.
Because of its importance there are many proofs (e.g., Lov\'asz~\cite{Lov}, Kalai~\cite{Kalai}) and generalizations (e.g., Alon~\cite{Alon}, F\"uredi~\cite{ZF26}).
For applications and extensions of the concept the surveys of F\"uredi~\cite{ZF70} and Tuza~\cite{TU1,TU2} are recommended.

A \emph{cross intersecting set pair system % {\rm (SPS)}
of size $m\ge 2$} consists of finite sets $A_1,\ldots,A_m$ %\subseteq\NN$
and $B_1,\ldots,B_m$ %\subseteq\NN$
 such that % with two basic properties:

$$A_i\cap B_i=\emptyset \mathrm{\ for\ every\ } 1\le i \le m,$$
$$A_i\cap B_j\ne \emptyset \mathrm{\ for\ every\ } 1\le i\ne j \le m.$$
We will consider % make some
 further constrains % restrictions
  but always keep these two basic properties.

Bollob\'as' theorem~\cite{BO} states that
\beq \label{boll} m\le {a+b\choose a} \eeq
must hold for any cross intersecting set pair system if we have % further require
$|A_i|\le a$ and $|B_i|\le b$ for each $i$. This size can be achieved by the {\em standard example}, taking all $a$-element sets of an $(a\!+\!b)$-element set for the $A_i$-s and their complements as $B_i$-s.

Let $\AA=\{A_i\}_{i=1}^m$ and
$\BB=\{B_i\}_{i=1}^m$.
The set pair system (SPS for short) is denoted by
$(\AA,\BB)=\{(A_i,B_i)\}_{i=1}^{m}$.
%%% The union of the two hypergraphs is denoted by $\HH=\AA\cup\BB$.
An SPS is $(a,b)$-\emph{\restricted} \!\!\! if $|A_i|\le a$ and $|B_i|\le b$
for each $i$.

An  SPS $\SS$ is \emph{$1$-cross intersecting} if $|A_i\cap B_j|=1$ for each $i\ne j$.
Our aim is to find good estimates for the size under this condition. This  leads to interesting but seemingly difficult problems.

Our results are summarized in the next five subsections. In two warm-up sections we show that an  $1$-cross intersecting $(n,n)$-\restricted  SPS $\SS$ can have exponential size and that its size is bounded by the sizes of the vertex sets of $\AA$ (and $\BB$). We show how the latter provides an alternate ending of Gasparian's proof of Lov\'asz's perfect graph theorem. The next two subsections present our main results:  sharp bound of the size in the $(2,n)$-\emph{\restricted}case (Theorem \ref{2n}) and asymptotically best bounds for the size in the $(n,n)$-\emph{\restricted} case when $\AA,\BB$ are linear (Theorem \ref{gup}) and when $\AA,\BB$ are $1$-intersecting (Theorem \ref{hup}).  Then we show the connection of $1$-cross intersecting SPS-s  with clique partition of graphs.

Although the main results of this article are about $1$-intersecting families, we propose the problem in a very general setting in Section \ref{notgen}.  The proof of the upper bounds are in Sections~\ref{1-cross},~\ref{linear}. The constructions giving the lower bounds are in Section \ref{lboundlin} and we conclude with some open problems.

\subsection{1-cross intersecting SPS of exponential sizes}\label{expsize}

% Our starting point is  the study of

 % Somewhat surprisingly,
A  $1$-cross intersecting $(n,n)$-\restricted  SPS can have exponential size.
 %%%  because of the following proposition.
\begin{restatable}{prop}{propmult}\label{mult}
  If there exist an  $(a_1,b_1)$-\restricted  1-cross intersecting SPS of size $m_1$
  and an $(a_2,b_2)$-\restricted  1-cross intersecting SPS of size $m_2$,
then an $(a_1\!+\!a_2, b_1\!+\!b_2)$-\restricted  1-cross intersecting SPS also exists of  size $m_1\cdot m_2$.
\end{restatable}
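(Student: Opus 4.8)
The plan is to construct the product system by taking Cartesian-product-like combinations of the two given systems on disjoint ground sets. Let $(\{A_i'\}_{i=1}^{m_1},\{B_i'\}_{i=1}^{m_1})$ be the first system (on ground set $V_1$, with $|A_i'|\le a_1$, $|B_i'|\le b_1$) and $(\{A_j''\}_{j=1}^{m_2},\{B_j''\}_{j=1}^{m_2})$ the second (on ground set $V_2$ disjoint from $V_1$, with $|A_j''|\le a_2$, $|B_j''|\le b_2$). For each pair $(i,j)\in[m_1]\times[m_2]$ I would define
\beq
A_{(i,j)}=A_i'\cup A_j'', \qquad B_{(i,j)}=B_i'\cup B_j''.
\eeq
This gives $m_1 m_2$ pairs, each with $|A_{(i,j)}|\le a_1+a_2$ and $|B_{(i,j)}|\le b_1+b_2$, so the size and thickness bounds are immediate. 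It remains to verify the set-pair conditions.

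The diagonal condition is easy: $A_{(i,j)}\cap B_{(i,j)}=(A_i'\cap B_i')\cup(A_j''\cap B_j'')=\emptyset$, using disjointness of $V_1,V_2$ and the diagonal property in each factor. For the cross condition, take $(i,j)\ne(i',j')$ and compute
\beq
A_{(i,j)}\cap B_{(i',j')}=(A_i'\cap B_{i'}')\cup(A_j''\cap B_{j'}''),
\eeq
again because the two ground sets are disjoint. Now I split into cases. If $i\ne i'$ and $j\ne j'$, then by the $1$-cross intersecting property of each factor both terms on the right have size exactly $1$, so the union has size exactly $2$ — which would violate the $1$-cross intersecting requirement. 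This is the main obstacle, and it shows the naive product does not work; the construction must be modified.

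The fix I would use is to pad the first system with a fixed extra point. Pick a new element $x\notin V_1\cup V_2$ and set $\widehat{A_i'}=A_i'\cup\{x\}$ for all $i$, leaving $B_i'$ unchanged; one checks this keeps the first system $1$-cross intersecting — adding $x$ to every $A_i'$ does not change any intersection $A_i'\cap B_j'$ since $x\notin B_j'$ — at the cost of raising the $A$-thickness to $a_1+1$. Hmm, but that shifts the thickness, so instead the right move is to choose, in the second factor, to use its $B$-side to "mark" which first-factor pair we came from: replace $A_j''$ by $A_j''\cup\{x\}$ and also replace every $B_i'$ in the first factor by $B_i'\cup\{x\}$, i.e. put a common point $x$ into all $A''$'s and all $B'$'s. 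Then $A_{(i,j)}\cap B_{(i',j')}$ picks up $x$ exactly when we need the second-factor contribution to vanish. Concretely, with $A_{(i,j)}=A_i'\cup(A_j''\cup\{x\})$ and $B_{(i,j)}=(B_i'\cup\{x\})\cup B_j''$, the intersection becomes $(A_i'\cap B_{i'}')\cup\{x\}\cup(A_j''\cap B_{j'}'')$ when... no — this still gives size $\ge 2$ in the generic case. The correct construction, which I would verify carefully, is the \emph{two-point} trick: use $x_1$ added to all $A_i'$ and all $B_j''$, and $x_2$ added to all $B_i'$ and all $A_j''$, so that $A_{(i,j)}=A_i'\cup A_j''\cup\{x_1\}$ (with $x_1$ from the $A_i'$ side but we also need it consistent)... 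The genuinely clean realization, and the one the paper surely intends, is: on disjoint grounds plus one new point $p$, set $A_{(i,j)}=A_i'\cup A_j''$ and $B_{(i,j)}=B_i'\cup B_j''$ but delete one carefully chosen element from the larger intersection — this is where the real work lies. I expect the verification that exactly one element survives in every cross intersection, across all four cases ($i=i',j\ne j'$; $i\ne i',j=j'$; $i\ne i', j\ne j'$), to be the crux, and that the resulting $A$- and $B$-sizes still respect $a_1+a_2$ and $b_1+b_2$ to be a short but essential bookkeeping step.
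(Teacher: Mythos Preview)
Your proposal has a genuine gap: you correctly identify that the naive product on two disjoint ground sets fails (the cross intersection has size $2$ when both coordinates differ), but none of your attempted fixes actually works, and you end by deferring the ``real work'' without doing it. Adding common points or deleting elements will not repair the construction while keeping the size bounds, because the defect is structural: with only one copy of each factor there is no way to make the first-factor contribution vanish exactly when the second-factor contribution is nonzero.

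The paper's fix is different from everything you tried and is very clean. Instead of one copy of the first system, take $m_2$ \emph{disjoint} copies of it, one for each index of the second system. That is, on pairwise disjoint ground sets $V_1,\dots,V_{m_2}$ (all disjoint from the ground set of the second system) place copies $(\mathcal{A}_i,\mathcal{B}_i)=\{(A_{i,j},B_{i,j})\}_{j=1}^{m_1}$, and set
\[
A'_{i,j}=A_{i,j}\cup A_i,\qquad B'_{i,j}=B_{i,j}\cup B_i.
\]
Now for $(i,j)\ne(i',j')$ one has $A'_{i,j}\cap B'_{i',j'}=(A_{i,j}\cap B_{i',j'})\cup(A_i\cap B_{i'})$. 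If $i\ne i'$ the first term is empty (the copies live on disjoint ground sets) and the second has size exactly $1$; if $i=i'$ then $j\ne j'$, the second term is empty, and the first has size exactly $1$. The size bounds $a_1+a_2$ and $b_1+b_2$ are immediate. The single missing idea in your attempt is precisely this replication of one factor across disjoint copies indexed by the other factor.
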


The proof of this, and most other proofs, are postponed to later sections.

Starting from the standard example (with $a=b=1$ and $m=2$),
Proposition~\ref{mult} yields an $(n,n)$-\restricted  $1$-cross intersecting
SPS of size $2^n$, exponential in $n$.
Define the $(2,2)$-\restricted  $1$-cross intersecting SPS, called
$\WWW(2,2)$, using the edges of a five-cycle and its complement. The five pairs are  $\left(\{i, i\!+\!1\},\{i\!+\!2, i\!+\!4\}\right)$
taken$\pmod 5$. Then Proposition~\ref{mult} gives the following.

\begin{coroll}\label{fexp}  There exists an $(n,n)$-\restricted  $1$-cross intersecting SPS of size $5^{n/2}$ if $n$ is even and of size $2\cdot 5^{(n-1)/2}$ if $n$ is odd.  \qed
\end{coroll}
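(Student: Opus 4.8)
The plan is to build these systems by iterating Proposition~\ref{mult}, starting from two ingredients already at hand: the standard example, which is a $(1,1)$-\restricted $1$-cross intersecting SPS of size $2$, and the system $\WWW(2,2)$, a $(2,2)$-\restricted $1$-cross intersecting SPS of size $5$. Proposition~\ref{mult} does all the real work — it produces, from two $1$-cross intersecting systems on (after relabeling) disjoint ground sets, the ``product'' system whose pairs are the $m_1m_2$ unions $(A_i\cup A'_j,\,B_i\cup B'_j)$, which automatically inherits $A\cap B=\emptyset$ and the single-element cross intersections — so the Corollary is a matter of choosing the factors correctly.

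First I would record (or re-verify) that $\WWW(2,2)$ is indeed $1$-cross intersecting. Writing $A_i=\{i,i+1\}$ and $B_i=\{i+2,i+4\}=\{i+2,i-1\}\pmod 5$, the four elements $i-1,i,i+1,i+2$ are distinct mod $5$, so $A_i\cap B_i=\emptyset$. Since the cyclic shift $x\mapsto x+c$ sends $(A_i,B_j)$ to $(A_{i+c},B_{j+c})$, the value $|A_i\cap B_j|$ depends only on $j-i\pmod 5$, so it suffices to check $i=0$ against $j=1,2,3,4$; each of these four intersections is a single point. This small finite check is the only computational point in the whole argument.

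For $n$ even, write $n=2k$ and prove by induction on $k$ that there is a $(2k,2k)$-\restricted $1$-cross intersecting SPS of size $5^k$: the case $k=1$ is $\WWW(2,2)$, and the inductive step applies Proposition~\ref{mult} to a $(2k-2,2k-2)$-\restricted system of size $5^{k-1}$ and the $(2,2)$-\restricted system $\WWW(2,2)$ of size $5$, giving a $(2k,2k)$-\restricted system of size $5^{k-1}\cdot 5=5^{k}=5^{n/2}$. For $n$ odd, write $n=2k+1$: when $k=0$ the standard example is already a $(1,1)$-\restricted system of size $2=2\cdot 5^{(n-1)/2}$, and for $k\ge 1$ I apply Proposition~\ref{mult} once more to the $(2k,2k)$-\restricted size-$5^{k}$ system just constructed and the $(1,1)$-\restricted standard example of size $2$, obtaining a $(2k+1,2k+1)$-\restricted $1$-cross intersecting SPS of size $2\cdot 5^{k}=2\cdot 5^{(n-1)/2}$.

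There is essentially no obstacle here beyond bookkeeping: the nontrivial combinatorics is hidden in Proposition~\ref{mult}, and the only care needed is the even/odd split and the verification of $\WWW(2,2)$ above. I would also remark that the product operation of Proposition~\ref{mult} is, up to relabeling the ground set, associative and commutative, so one may equivalently describe the extremal construction as the product of $\lfloor n/2\rfloor$ copies of $\WWW(2,2)$ and (when $n$ is odd) one copy of the standard example, taken in any order.
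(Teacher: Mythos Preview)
Your proof is correct and follows exactly the paper's approach: iterate Proposition~\ref{mult} on $\lfloor n/2\rfloor$ copies of $\WWW(2,2)$, together with one copy of the standard $(1,1)$ example when $n$ is odd. Your explicit verification that $\WWW(2,2)$ is $1$-cross intersecting is a welcome addition, since the paper states this without checking it.
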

This is the best lower bound we know. % n to us.
It remains a challenge to decrease essentially the upper bound $2n\choose n$ in (\ref{boll}) for an $(n,n)$-\restricted  $1$-cross intersecting SPS.

Corollary~\ref{fexp} gives a $(3,3)$-\restricted $1$-cross intersecting SPS of size 10, in fact two different ones, with
12 and with 15 vertices, depending on the order we apply Proposition~\ref{mult}.
We have a third example, the pairs $(\{i, i\!+\!1, i\!+\!2\}, \{i\!+\!3, i\!+\!6, i\!+\!9\})$ taken$\pmod{10}$ has 10 vertices.
Samuel Spiro (sspiro@ucsd.edu) informed us that his computer program successfully checked that 10 is indeed the largest size.

\subsection{1-cross intersecting SPS and perfect graphs}
One particular feature of a $1$-cross intersecting SPS $\SS$ is that its size is bounded by the sizes of the vertex sets of $\AA$ (and $\BB$). This can be considered as a variant of Fischer's inequality, and does not hold for general SPS.
% This follows from the following proposition.

\begin{restatable}{prop}{proplinf}\label{linf}
  Assume that $\SS$ is $1$-cross intersecting and $V:=\cup \AA$. Then
the characteristic vectors of the edges of $\AA$ are linearly independent in $\RR^V$.
\end{restatable}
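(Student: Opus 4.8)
The plan is a Fischer-type linear algebra argument. Set $V:=\cup\AA$ and, for each $i$, let $\mathbf{a}_i\in\RR^V$ be the incidence vector of $A_i$. The key auxiliary objects are the incidence vectors $\mathbf{b}_j\in\RR^V$ of the sets $B_j\cap V$; one restricts to $V$ here because a priori $B_j$ may contain vertices outside $V$, and this is harmless since $\mathbf{b}_j$ will only ever be paired with vectors supported on $V$. Because $A_i\subseteq V$, one has $\langle\mathbf{a}_i,\mathbf{b}_j\rangle=|A_i\cap B_j|$, which equals $0$ when $i=j$ (by $A_i\cap B_i=\emptyset$) and $1$ when $i\ne j$ (by $1$-cross intersection).

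Next I would take an arbitrary linear dependence $\sum_{i=1}^m c_i\mathbf{a}_i=\mathbf{0}$ and pair it with $\mathbf{b}_j$ for each fixed $j$, obtaining $\sum_{i\ne j}c_i=0$. Writing $S:=\sum_{i=1}^m c_i$, this reads $S=c_j$ for every $j$; summing over $j$ gives $S=mS$, so $(m-1)S=0$, and since $m\ge 2$ by the definition of an SPS we get $S=0$ and hence every $c_j=0$. Phrased in matrix language: the $m\times m$ matrix $\bigl(\langle\mathbf{a}_i,\mathbf{b}_j\rangle\bigr)_{i,j}$ equals $J-I$, whose eigenvalues are $m-1$ (once) and $-1$ (with multiplicity $m-1$), so it is nonsingular; since it factors through $\operatorname{span}\{\mathbf{a}_1,\dots,\mathbf{a}_m\}$, that span has dimension $m$.

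I do not anticipate a genuine obstacle; the only subtlety is the one noted above — that the $B_j$ need not lie inside $V$, so one works with $B_j\cap V$ — together with the point that $m\ge 2$ (built into the notion of a cross intersecting SPS) is exactly what makes $J-I$ invertible. As a bonus, running the same argument with the roles of $\AA$ and $\BB$ interchanged shows the incidence vectors of $\BB$ are independent in $\RR^{\cup\BB}$, which yields the announced bound $m\le|\cup_{i=1}^m A_i|$ (and likewise $m\le|\cup_{i=1}^m B_i|$).
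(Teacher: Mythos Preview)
Your argument is correct and essentially identical to the paper's: both take a putative dependence $\sum_i \lambda_i\mathbf{a}_i=\mathbf{0}$, pair with each $\mathbf{b}_j$ to obtain $\sum_i\lambda_i=\lambda_j$, and sum over $j$ using $m\ge 2$ to conclude all $\lambda_j=0$. Your extra remarks (restricting $B_j$ to $V$, the $J-I$ matrix interpretation, and the symmetric bound $m\le|\cup\BB|$) are nice touches but do not change the route.
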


A special case of Proposition~\ref{linf} relates to perfect graphs and can be used in Gasparian's proof~\cite{GAS, D}  of Lov\'asz's characterization~\cite{LO} of perfect graphs:
a graph $G$ is perfect if and only if
\begin{equation}\label{lchar}
|V(H)|\le \alpha(H)\omega(H)
\end{equation}
holds for all induced subgraphs $H$ of $G$.

To prove the nontrivial part, Gasparian showed that if a minimal imperfect graph $G$ would satisfy (\ref{lchar}) then there is a 1-cross intersecting SPS of size $m=\alpha(G)\omega(G)+1$  defined by independent sets and complete subgraphs of $G$.  By Proposition \ref{linf}, $|V(G)|\ge \alpha(G)\omega(G)+1$, contradicting (\ref{lchar}).

\subsection{$(2,n)$-\emph{\restricted} 1-cross intersecting SPS}

Here we state the best bound for the size of $(2,n)$-\emph{\restricted} 1-cross intersecting SPS showing that the main term of the upper bound $\frac{1}{2}(n+2)(n+1)$ in (\ref{boll}) can be halved.

\begin{restatable}{thm}{thmtwon}\label{2n}
  Let $n\ge 4$, and let $\SS$ be a $(2,n)$-\restricted
$1$-cross intersecting SPS of size $m$. Then  $$m\le \left(\left\lfloor\frac{n}{2}\right\rfloor+1\right)\left(\left\lceil\frac{n}{2}\right\rceil+1\right).$$
This bound is the best possible.
For $n=2, 3$ the exact values are $m=5, 7$.
\end{restatable}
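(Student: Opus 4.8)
The plan is to put $V:=\cup\AA$; since all intersection conditions refer only to $V$ we may assume $B_i\subseteq V$ for every $i$, so each $B_i$ is a vertex set disjoint from $A_i$ meeting every other $A_j$ in a single point. I will induct on $n$, writing $f(n):=\bigl(\lfloor n/2\rfloor+1\bigr)\bigl(\lceil n/2\rceil+1\bigr)$. First I dispose of singletons: if $A_i=\{x\}$, the intersection conditions force $x$ into no $A_j$ and into every $B_j$ with $j\neq i$; hence if there are $t\ge 1$ singletons, deleting their $t$ vertices from all sets produces a $(2,n{-}t)$-\restricted $1$-cross intersecting SPS of size $m-t$, and a short check (using the true maxima $1,2,5,7$ for $n{-}t\in\{0,1,2,3\}$ and $f(k)-f(k-1)=\lfloor k/2\rfloor+1$ otherwise) gives $m\le f(n)$ for $n\ge 4$. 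So assume every $|A_i|=2$. Then the $A_i$ are distinct $2$-sets ($A_i=A_j$ would force $A_j\cap B_j\neq\emptyset$), i.e.\ $\AA$ is a simple graph $G$ with $m$ edges and no isolated vertex, whose edge incidence vectors are linearly independent by Proposition~\ref{linf}. Since $B_j$ meets every edge of $G$ except $A_j$ while containing neither endpoint of $A_j$, each $G-A_j$ is bipartite; together with independence of the incidence vectors this forces $G$ to be either a single odd cycle $C_{2k+1}$ (and then tracing the cut $B_j$ around the cycle forces $|B_j|\ge k$, so $m=2k+1\le 2n+1\le f(n)$) or a forest.

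The forest case is the heart. Write $G=T_1\cup\cdots\cup T_p$ with $T_\ell$ having bipartition classes of sizes $s_\ell\le t_\ell$; set $c_\ell:=s_\ell$, $C:=\sum_\ell c_\ell$, so $|V|=\sum_\ell(s_\ell+t_\ell)$ and $m=|V|-p$. Fix an edge $A_j=\{x,y\}$ in some $T_0$. For $\ell\neq 0$, $B_j$ must cross every edge of the connected bipartite tree $T_\ell$, so $B_j\cap V(T_\ell)$ equals one of its two classes and has size $\ge c_\ell$. In $T_0$, split $T_0-A_j$ into the components $T_0^x\ni x$ and $T_0^y\ni y$; since $B_j$ crosses every edge of these subtrees and avoids $x$ and $y$, the sets $B_j\cap V(T_0^x)$ and $B_j\cap V(T_0^y)$ are forced to be the classes avoiding $x$, respectively $y$, and their total size is precisely the number $D(A_j)$ of vertices of $T_0$ at odd distance from the edge $A_j$ (distance to an edge meaning distance to its nearer endpoint). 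Summing contributions, $n\ge |B_j|\ge D(A_j)+\sum_{\ell\neq 0}c_\ell=D(A_j)+C-c_0$.

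The key point is which edge to test. Every tree has a leaf in its \emph{larger} bipartition class: otherwise that class has minimum degree $\ge 2$ and its degree sum exceeds the number of edges. Choosing $A_j$ to be the edge from such a leaf $y$ of $T_0$ to its neighbour makes $T_0^y=\{y\}$, hence $D(A_j)=t_0-1$, and the displayed inequality gives $t_\ell-1+C-c_\ell\le n$, i.e.\ $t_\ell\le n+1+s_\ell-C$, for every $\ell$. Summing, $m=\sum_\ell(s_\ell+t_\ell)-p\le 2C+p(n-C)$; maximising $2C+p(n-C)$ over the feasible region $1\le p\le C\le n+1$ (the cases $p=1,2$ give only $2n+1$ and $2n$, and for $p\ge 3$ one has $2C+p(n-C)\le p(n+2-p)$) yields the bound $f(n)$, attained when every $T_\ell$ is a star and $p\approx(n+2)/2$. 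This also exhibits the extremal example: the disjoint union of $\lfloor n/2\rfloor+1$ stars, each with $\lceil n/2\rceil+1$ leaves, with $B_j$ taken to be all the other leaves of $A_j$'s star together with the centres of the remaining stars (so $|B_j|=n$); together with $C_5,C_7$ for $n=2,3$ this gives the matching lower bound. The step I expect to be the main obstacle is the forest analysis — correctly pinning $B_j$ down tree by tree, recognising the ``odd distance from an edge'' quantity, and realising that a leaf of the larger bipartition class is the right edge to test; once that is in place the optimisation is routine arithmetic.
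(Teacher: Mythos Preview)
Your argument is correct and reaches the same dichotomy as the paper (a single odd cycle versus a forest), but the forest analysis is genuinely different. The paper proves a replacement lemma (Lemma~\ref{maxfi}): in any non-star tree component $T$ with $t$ edges one has $\max_{A_i\in T}|B_i\cap V(T)|\ge\lceil t/2\rceil$, which lets one swap $T$ for two stars of $\lceil t/2\rceil$ edges each and thereby reduce to the all-star case before optimising over the number $k$ of stars to get $m\le k(n+2-k)$. You bypass this reduction entirely: by observing that every tree has a leaf in its larger colour class and testing the pendant edge there, you obtain $t_\ell\le n+1+s_\ell-C$ directly for every component, and then optimise $2C+p(n-C)$ over $p\le C\le n+1$. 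Your route is arguably more transparent (no auxiliary ``replace and re-prove'' step), while the paper's lemma has the side benefit of immediately identifying the extremal structures as unions of stars. Two small points: the paper handles $|A_i|<2$ simply by padding with dummy vertices rather than your inductive deletion, which is shorter; and in your induction the value you quote for $n-t=1$ is off --- the standard example on three points shows $m(2,1,*,*,1)=3$, not $2$ --- but since $n+2\le f(n)$ for $n\ge4$ this does not affect the conclusion.
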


\subsection{1-cross intersecting SPS in linear hypergraphs}\label{main}

A hypergraph $\HH$ is called \emph{linear} if the intersection of any two
different edges has at most one vertex.
$\HH$ is called $1$-{\em intersecting} if $|H\cap H'|=1$ for all $H, H'\in \HH$ whenever $H\neq H'$.

% Our first observation here is that if
If one of $\SS$, say $\cal{A}$, in an  SPS is
linear, then the size of this SPS is bounded by  $n^2+O(n)$ (without any assumption on $|B_i\cap B_j|,|A_i\cap B_j|$).

\begin{restatable}{prop}{propujup}\label{ujup}
Suppose that $(\AA,\BB)$ is an $(n,n)$-\restricted
 cross intersecting SPS of size $m$ such that $\AA$ is a linear hypergraph.  Then $m \le n^2 +n+1$.
% The size of an $(n,n)$-\restricted  cross intersecting SPS  $\SS$ with linear $\AA$ is at most $n^2+n+1$.
\end{restatable}

When $\AA$ and $\BB$ are both linear, and they form a  1-cross intersecting SPS then this  bound % of Proposition \ref{ujup}
 can be approximately halved.

\begin{restatable}{thm}{thmgup}\label{gup}
Suppose that $(\AA,\BB)$ is an $(n,n)$-\restricted
 1-cross intersecting SPS of size $m$ such that both $\AA$ and $\BB$ are linear hypergraphs.  Then $m \le \frac{1}{2}n^2 +n+1$.
% $7\le m_3(1,1,1)\le 9$.
\end{restatable}

A further small decrement comes if in addition $\AA$ and $\BB$ are both
1-intersecting hypergraphs. Then % For these hypergraphs
 their union $\HH=\AA\cup\BB$ can be considered as a ``geometry'' where two lines intersect in at most one point, and every line has exactly one parallel line. %(Type 2 SPS)

\begin{restatable}{thm}{thmhup}\label{hup}
Assume that $\SS$ is an $(n,n)$-\restricted 1-cross intersecting SPS of size $m$
such that both $\AA$ and $\BB$ are 1-intersecting. Then $m \le {n\choose 2}+1$ for $n>2$.
If $n\ge 4$ and equality holds, then %we have equality,
 $\HH$ is $n$-uniform and $n$-regular $(|A_i|=|B_i|=n$ for $i=1,\dots,m$ and $d_\AA(v)=d_\BB(v)=n)$.
\end{restatable}

In Section~\ref{lboundlin} we give constructive lower bounds.  Constructions~\ref{lboundlin}.1, \ref{lboundlin}.2 and \ref{lboundlin}.3  show that the upper bounds in this subsection are asymptotically the best possible.

\subsection{1-cross intersecting SPS and clique partitions of graphs}\label{cliquepart}
The notion of $1$-cross intersecting SPS is closely related to the concept of clique and biclique partitions. A {\em clique partition} of a graph $G$ is a partition of the edge set of $G$ into complete graphs. Similarly, a {\em biclique partition} of a bipartite graph $B$ is a partition of the edge set of $B$ into complete bipartite graphs (bicliques). The minimum number of cliques (bicliques) needed for the clique (or biclique) partitions are well studied, see, for example~\cite{GMW}. Our problem relates to another parameter of clique (biclique) partitions. The {\em \width } of a clique (biclique) partition of a graph (bipartite graph) is the minimum $s$ such that every vertex of the graph (bipartite graph) is in at most $s$ cliques (bicliques).
Let $T_{2m}$ be the {\it cocktail party graph}, i.e., the complete graph $K_{2m}$ from which a perfect matching is removed. Let $B_{2m}$ be the bipartite graph obtained from the complete bipartite graph $K_{m,m}$ by removing a perfect matching.

Assume that $\SS$ is an $(n,n)$-\restricted   $1$-cross intersecting SPS of   size $m$, and $\HH=\AA\cup\BB$.
The dual of this hypergraph, $\HH^*$, has vertex
set $$V^*=\{x_1,\dots,x_m,y_1,\dots,y_m\}$$ where $x_i,y_i$ correspond to
$A_i,B_i$. The hyperedges of $\HH^*$ correspond to vertices of $\HH$. Since
$|A_i\cap B_j|=1$ for $i\ne j$, every pair $x_i,y_j$ for $i\ne j$ is covered
exactly once by a hyperedge of $\HH^*$. On the other hand, $|A_i\cap
B_i|=0$ for every $i$ so the pairs $x_i,y_i$ are not covered by any
hyperedge of $\HH^*$. Thus the %subgraphs of $B_{2m}$
complete graphs induced by the hyperedges of $\HH^*$ form a biclique
partition of \width $n$ of the bipartite graph $B_{2m}$.

If we have the additional assumption that $\AA$ and $\BB$ are both
1-intersecting then the pairs $x_i,x_j$ and the pairs $y_i,y_j$ are also covered exactly once by the
hyperedges of $\HH^*$.  Thus in this case the complete graphs induced by the hyperedges of $\HH^*$ form a clique
partition of \width $n$ of the cocktail party graph $T_{2m}$.

The above argument gives the following.

\begin{restatable}{thm}{thmtrans}\label{trans}
The maximum $m$ such that $B_{2m}$ has a biclique partition of \width $n$ is
equal to the maximum size of an $(n,n)$-\restricted $1$-cross intersecting SPS.  The maximum $m$ such that $T_{2m}$ has a clique partition of
\width $n$ is equal to the maximum size of an $(n,n)$-\restricted 1-cross
intersecting SPS in which $\AA$ and $\BB$ are also $1$-intersecting.
\end{restatable}

%Theorem \ref{trans} can be used to find condition for equality in Theorem \ref{hup}.

%\begin{prop}\label{pp} For $n\ge 4$ Theorem \ref{hup} gives equality if and only if $T_{n(n-1)+2}$ has a clique partition into $n(n-1)+2$ cliques.
%%%% were also investigated by Lamken, Mullin, and Vanstone~\cite{LMV} (under the name of `twisted projective planes').

\section{Notation and general setting}\label{notgen}

Let $a, b$ positive integers and $I_A, I_B, I_{\mathrm{cross}}$ three sets of non-negative integers.
We denote by $m(a, b, I_A, I_B, I_{\mathrm{cross}})$ the maximum size $m$ of a
cross intersecting SPS $\SS$ with the following % additional
 conditions.
 % restrictions.
 % that is, upper bounds on cardinalities.

\begin{enumerate}[i)]
\item $A_i\cap B_i=\emptyset$ for every $1\le i \le m$,
\item $|A_i|\le a$ for every $1\le i\le m$,
\item $|B_i|\le b$ for every $1\le i\le m$,
\item $|A_i\cap A_j|\in I_A$ for every $1\le i\ne j\le m$,
\item $|B_i\cap B_j|\in I_B$ for every $1\le i\ne j\le m$,
\item $0<|A_i\cap B_j|\in I_{\mathrm{cross}}$ for every $1\le i\ne j\le m$.
\end{enumerate}

\goodbreak

To avoid trivialities we always suppose that $0\not\in I_{\mathrm{cross}}$, also that $m\geq 2$.
If a constraint in iv)--vi) is vacuous (i.e., either $\{ 0,1, \dots, a\} \subseteq I_A$ or $\{ 0,1, \dots, b\} \subseteq I_B$ or $\{ 1, \dots, \min \{ a,b\}\} \subseteq I_{\mathrm{cross}}$)
then we use the symbol $*$ to indicate this.
With this notation Bollob\'as' theorem~\cite{BO} states     $$m(a, b, *, *, *)={a+b\choose a},$$
and our Theorem~\ref{2n} states (for $n\geq 4$)
$$m(2, n, *, *, 1)=\left(\left\lfloor\frac{n}{2}\right\rfloor+1\right)\left(\left\lceil\frac{n}{2}\right\rceil+1\right).$$
In the rest of the results we deal with
   the case $a=b=n$ and use the
abbreviation of placing $n$ as an index
$$
 m_n(I_A, I_B, I_{\mathrm{cross}}):=m(n, n, I_A, I_B, I_{\mathrm{cross}}).$$

Since in this paper the main results are about linear hypergraphs, we will
have $I_A$ (and also $I_B$) is either $\{ 0,1\}$ ($\AA$ is a linear hypergraph), or $\{ 1\}$
 ($\AA$ is a 1-intersecting hypergraph), or $*$.
Instead of writing $I_A=\{ 1\}$ we write `$\inte$',
 instead of $I_A=\{ 0, 1\}$ we write `$\lii$', and
 for $I_{\mathrm{cross}}= \{ 1\}$ we use just `1' (as we did above).

% Also we introduce shorthands to indicate these sets, as `1' or `$=\! 1$'.
% For example $m(a,b,=\! 1,*,1)$ denotes the maximum size $m$ of a
% cross intersecting SPS $\SS$ with the following additional restrictions.
% For every distinct $i,j\in \{1,\ldots,m\}$
% we have $|A_i|\le a$, $|B_i|\le b$, $|A_i\cap A_j|= 1$,
% $|B_i\cap B_j|\le \infty$ and $|A_i\cap B_j|\le 1$.

% We also consider restricting the cardinalities to exact values in
% conditions iii) and iv). In these cases we use the equality sign. For
% example, $m(a,b,=\!1,1,*)$ stands for the maximum size of a
% cross intersecting SPS $\SS$ with the
% restrictions that for every $i\ne j$ we have
% $|A_i|\le a$, $|B_i|\le b$, $|A_i\cap A_j|= 1$,
% $|B_i\cap B_j|\le 1$ and $|A_i\cap B_j|\ne 0$.

Adding more restrictions can only decrease the maximum size, so we have
% \beq   \label{eq2}
% m_n(=\!1, =\!1, 1)\le m_n(=\!1, 1, 1)\le m_n(1, 1, 1)\leq m_n(1, *, *).
%   \eeq
% $$\le m_n(1, 1, *)\le m_n(1, *, *)\le m_n(*, *, *) = {2n\choose n}.$$
\beq\label{eq22}
m_n(\inte, \inte, \cro)\le m_n(\inte, \lii, \cro)\le m_n(\lii, \lii, \cro).
  \eeq

In fact, we examined all 18 cases for $m_n(I_A, I_B, I_{\mathrm{cross}})$ where $I_A$ and $I_B$  are chosen from $\{ 1\}$, $\{ 0, 1\}$, or $*$
% in $\{=1, 1, *\}$ and the third argument is in $\{1,*\}$,
and $I_{\mathrm{cross}}$ is either $\{ 1\}$ or $*$.
% by symmetry there are twelve possibilities.
By symmetry they define twelve functions.
Summarizing our results,
$m_n(*, *, 1)$ and $m_n(*, *, *)$ are exponential as a function of $n$,
the other cases are polynomial.
Three of them, mentioned in~\eqref{eq22}, are asymptotically $\frac{1}{2}n^2$
while the other seven are asymptotically $n^2$.

Several problems under assumptions similar
to $1$-cross intersecting SPS % and their refinements
 have been studied before, see, e.g.,~\cite{Blok,CGYLT,FU,TU1} and more recently in~\cite{GKMNPTX,SW}.

\section{$1$-cross intersecting SPS -- proofs}\label{1-cross}

\propmult*

%\noindent {\bf Proof of Proposition \ref{mult}. }
\begin{proof}
We have to show that $$m(a_1\!+\!a_2, b_1\!+\!b_2, *, *, 1)\ge m(a_1, b_1, *,
*, 1)\cdot m(a_2, b_2, *, *, 1).$$
 Consider $t=m(a_2, b_2, *, *, 1)$ pairwise disjoint
  ground sets $V_1,\dots,V_t$ and for all $i\in [t]$ a copy $(\AA_i,\BB_i)$
  of a construction giving an $(a_1,b_1)$-\restricted  $1$-cross intersecting
 SPS  of size $s$ such that
 $\AA_i=\{A_{i,1},\dots,A_{i,s}\},\;\BB_i=\{B_{i,1},\dots,B_{i,s}\}$,
 where $s=m(a_1, b_1, *, *, 1)$.
 Let $(\AA,\BB)$ be  a copy of an $(a_2,b_2)$-\restricted
 $1$-cross intersecting SPS  of size $t$ on the ground set $V$ such that $\AA=\{A_1,\dots,A_t\},\; \BB=\{B_1,\dots,B_t\}$, where
$V$ is disjoint from all $V_i$-s. For any $1\le i\le t,\; 1\le j\le s$  define  $$A'_{i,j}=A_{i,j}\cup A_i, \; B'_{i,j}=B_{i,j}\cup B_i.$$  The pairs $(A'_{i,j},B'_{i,j})$ form a $1$-cross
intersecting SPS such that $|A'_{i,j}|\le a_1+a_2$ and
$|B'_{i,j}|\le b_1+b_2$.
\qed
%%% \end{proof}

\proplinf*

%\noindent {\bf Proof of Proposition \ref{linf}. }
%%% \begin{proof}
\proof
 Let ${\mathbf{a}}_i$ (resp.\ ${\mathbf{b}}_i$) denote the characteristic vector of $A_i$
  (resp.\ $B_i$), i.e. ${\mathbf{a}}_i(v)=1$ for $v\in V$ if
  and only if $v\in A_i$. Otherwise the coordinates are $0$.
  Suppose that
$$ \sum_{i=1}^m \lambda_i {\mathbf{a}}_i = {\mathbf{0}}.$$ Take the dot product of both sides
  of this equation with ${\mathbf{b}}_j$. Since
  $|A_i\cap B_j|=1$  for $i\ne j$ and
  $|A_i\cap B_j|=0$ for $i=j$, we get that
  $$\left(\sum_{i=1}^m \lambda_i\right)-\lambda_j=0.$$
  Adding
  these for all $j$ yields
  $(m-1) \left(\sum_{i=1}^m \lambda_i\right)=0$. Consequently
  (using $m>1$) $\sum_{i=1}^m \lambda_i=0$. Thus $\lambda_j=0$ for all
  $j$. \end{proof}

\smallskip
\thmtwon*

%\noindent {\bf Proof of Theorem \ref{2n}. }
\begin{proof}
Let $\SS$ be a $(2,n)$-\restricted
$1$-cross intersecting SPS of size $m$.
It is convenient to assume that $\AA$ is two-uniform (a graph without multiple edges)
  and  $\BB$ is an $n$-uniform hypergraph. (For smaller sets dummy vertices
  can be added).

 % First we remark that for a $(2,n)$-\restricted  cross intersecting SPS,
 % the property of being
 % $1$-cross intersecting is equivalent to the Sperner property (no hyperedge
 % of $\HH$  can contain another hyperedge).

Consider the simple graph $\AA$.

\begin{lemma}\label{evenc} If $\AA$ contains a cycle then $m\le 2n+1$.
\end{lemma}
\begin{proof}
The $n$-set $B_i$ must be an
independent transversal for all edges other than $A_i$ (i.e., intersects all edges of $\AA$ except $A_i$ but does not contain any edge of $\AA$)  and disjoint from the edge $A_i$.
Suppose that the graph $\AA$ contains an even cycle with edges $A_1=(x_1,x_2),A_2=(x_2,x_3).\dots A_{2k}=(x_{2k},x_1)$. Since $B_1$ is an independent transversal for all edges other than $A_1$, we have $x_3\in B_1$ which implies $x_4\notin B_1$, and so on, finally $x_{2k}\notin B_1,\; x_1\in B_1$ contradicting $A_1\cap B_1=\emptyset$. Thus $\AA$ has no even cycles.

If there is an odd cycle $C$ with $k$ vertices, it cannot contain a diagonal, since any diagonal would create an even cycle, contradicting the previous paragraph. If there is an edge $A_i$ with exactly one vertex, say $x_1$  on $C$, then the argument of the previous paragraph implies $x_2\in B_i,\; x_3\notin B_i,\dots, x_1\in B_i$, contradiction. Also, if there is an edge $A_i$ with no vertex on $C$ then $B_i$ must intersect all edges of $C$ so it cannot be an independent transversal. Thus in this case $m\le |C| \le 2n+1$.
\end{proof}

Assume next that $\AA$ is an acyclic graph.

\begin{lemma}\label{maxfi} Assume that $T\subseteq \AA$ is a non-star tree component with $t$ edges. Then $$\max_{A_i\in T} |B_i\cap V(T)|\ge \left\lceil{t\over 2}\right\rceil.$$
\end{lemma}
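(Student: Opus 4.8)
The plan is to pick the edge $A_i$ by an averaging argument over all $t$ edges of the tree $T$. First I would fix the bipartition of $T$ into its two colour classes $P$ and $Q$, and take an arbitrary edge $A_i=xy$ of $T$ with $x\in P$, $y\in Q$. Deleting $A_i$ splits $T$ into two subtrees, $C_x\ni x$ and $C_y\ni y$. Since $B_i\cap A_i=\emptyset$ and $|A_j\cap B_i|=1$ for every edge $A_j$ of $T$ with $j\ne i$, the set $B_i\cap V(T)$ is an independent set of $T$ that meets every edge of $T-A_i$ in exactly one vertex; restricted to the connected subtree $C_x$ it is therefore an independent vertex cover of $C_x$, hence one of the two colour classes of $C_x$, and as it avoids $x\in P$ it must be $Q\cap V(C_x)$ (this is trivial when $C_x=\{x\}$). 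Likewise $B_i\cap V(C_y)=P\cap V(C_y)$, so
$$|B_i\cap V(T)|=|Q\cap V(C_x)|+|P\cap V(C_y)|.$$

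Then I would sum this identity over the $t$ edges $A_i$ of $T$. Fix $v\in Q$ and root $T$ at $v$: for an edge $e=xy$ (with $x\in P$, $y\in Q$) one has $v\in V(C_x)$ exactly when $x$ is the parent of $y$, i.e.\ for exactly the $|Q|-1$ edges joining a vertex of $Q\setminus\{v\}$ to its parent; symmetrically each $v\in P$ lies in $V(C_y)$ for exactly $|P|-1$ of the edges. Hence
$$\sum_{A_i\in T}|B_i\cap V(T)|=|Q|(|Q|-1)+|P|(|P|-1)=|P|^2+|Q|^2-(t+1),$$
using $|P|+|Q|=t+1$.

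Finally I would use convexity: over integer pairs with $|P|+|Q|=t+1$, $|P|^2+|Q|^2$ is at least $t^2/2+t+1$ if $t$ is even and at least $(t+1)^2/2$ if $t$ is odd. In the even case the average of the $t$ numbers $|B_i\cap V(T)|$ is then at least $t/2$, so some edge attains $|B_i\cap V(T)|\ge t/2=\lceil t/2\rceil$. In the odd case the average is at least $\frac{(t+1)(t-1)}{2t}=\frac{t^2-1}{2t}$, which is strictly larger than $\frac{t-1}{2}$ precisely because $t>1$; since $T$ is not a star we have $t\ge 3$, so $\frac{t-1}{2}$ is a positive integer strictly below the average, forcing some $|B_i\cap V(T)|$ to be at least $\frac{t-1}{2}+1=\frac{t+1}{2}=\lceil t/2\rceil$.

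The substance of the argument is in the first two steps: recognising that $B_i\cap V(T)$ is forced to be the union of the $x$-free colour class of $C_x$ and the $y$-free colour class of $C_y$, and establishing the double-counting identity for $\sum_{A_i\in T}|B_i\cap V(T)|$. The parity bookkeeping at the end is routine, and the hypothesis that $T$ is not a star enters only to guarantee $t\ge 3$ — that is, to exclude a single edge, for which the statement is false.
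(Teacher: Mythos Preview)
Your proof is correct, but it follows a genuinely different route from the paper's.

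The paper does not average. Instead it picks two \emph{specific} edges: take a maximal path $x,y,z,\dots$ in $T$, set $A_1=\{x,y\}$ and $A_2=\{y,z\}$, and let $S$ be the set of leaves adjacent to $y$. Then $B_1\cap V(T)$ is the set $X$ of vertices at odd distance from $y$ in $T-x$, while $B_2\cap V(T)$ is $X'=S\cup D$ with $D$ the vertices at odd distance from $z$ in $T-(S\cup\{y\})$. A short count gives $|X|+|X'|=t+|S|-1\ge t$, so $\max(|X|,|X'|)\ge\lceil t/2\rceil$. This is shorter and identifies two concrete witnesses whose values sum to at least $t$.

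Your averaging argument, by contrast, yields the exact identity
\[
\sum_{A_i\in T}|B_i\cap V(T)|=|P|(|P|-1)+|Q|(|Q|-1),
\]
which is attractive in its own right and shows that the total depends only on the bipartition sizes, not on the shape of $T$. The convexity step then handles both parities. What your approach buys is a global formula; what it costs is a bit more bookkeeping, and in the odd case you only get that the average exceeds $\tfrac{t-1}{2}$ rather than the cleaner ``two values summing to $\ge t$'' of the paper. A minor remark on your final paragraph: your argument in fact goes through for every $t\ge 2$, not just $t\ge 3$ (the even case needs only $t\ge 2$, the odd case only $t\ge 3$), so the non-star hypothesis is used only via the implication ``non-star $\Rightarrow t\ge 3$''; the lemma itself happens to hold for all trees with $t\ge 2$.
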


\begin{proof}
Let $P=x,y,z,z_2, \dots$ be a maximal path of $T$, set
$A_1=\{x,y\},A_2=\{y,z\}$. Let $S\subseteq V(T)$ the set of leaves connected to
$y$.
%be the star whose edges are the leaves of $T$ adjacent to $y$.
  Note that $t\geq 3$, $|V(T)|=t+1$, $\;N_T(y)=S\cup \{ z\}$ and $x\in S$.
Then $B_1\cap V(T)$ is the set $X$ of vertices with odd distance from $y$ in the tree $T-x$.  On the other hand, $B_2\cap V(T)$ is the set $X'=S\cup D$ where
 %$C$ is the set of leaves of $S$  and
$D$ is the set of vertices with odd distance from $z$ in the tree $T-(S\cup \{
y\})$.  Then $|X|+|X'|=t+|S|-1\ge t$. % where $s\ge 0$ is the number of leaves of $S$ minus one.
Therefore
\begin{equation*}
  %%% \max_{A_i\in T} |B_i\cap V(T)|\ge
  \max\{|B_1\cap V(T)|, |B_2\cap V(T)|\}
  =\max\{ |X|,
|X'|\} \ge \left\lceil{t\over 2}\right\rceil.
\tag*{\qedhere}
\end{equation*}
\end{proof}

Assume that there is a non-star tree component $T$ in $\AA$ with $t$ edges, $A_1,\dots,A_t$,
$(t\ge 3)$.
We define another $(2,n)$-\restricted
$1$-cross intersecting SPS $(\mathcal{A'},\mathcal{B'})$ of size $m$. Let ${{\cal{A}}'}$ be the graph defined by replacing $T$ with $S$, where $S$ is the union of two vertex disjoint stars $S_1$ and $S_2$ with centers $s_1,s_2$ having $\left\lceil{t\over 2}\right\rceil$ and  $\left\lfloor{t\over 2}\right\rfloor$ edges, respectively. We keep all edges of the other components of $\AA$, i.e., $\AA'= (\AA\setminus E(T))\cup E(S)$.

For $i=1,\dots,t$ in case of $A_i'\in E(S_\alpha)$ let $C_i$ be the complement of $A_i'$ in the star $S_\alpha$ %%% containing $A_i'$
together with the center of the other star of $S$, i.e., $C_i= \left(V(S_\alpha)\setminus A_i'\right)\cup \{ s_{3-\alpha}\}$. Note that $|C_i|$ is either $\lfloor{t\over 2}\rfloor$ or $\lceil{t\over 2}\rceil$.
%%% For $i>t$ let $C_i:= \{ s_1, s_2\}$.
According to Lemma~\ref{maxfi} there is a hyperedge, say $B_1$, with
 $|B_1\cap V(T)|\ge \left\lceil{t\over 2}\right\rceil$.
Define ${{\cal{B}}'}$ as follows.
$$B_i':=\left\{\begin{array}{ll}  C_i\cup (B_1\setminus V(T))  & \text{for } 1\leq i\leq t, \\
                \{ s_1, s_2\}\cup (B_i\setminus V(T))& \text{for } i>t.
                \end{array}\right.
                $$

% For $i>t$ we define  $$B_i'=\{s_1,s_2\}\cup (B_i\setminus (B_i\cap T)).$$
\begin{claim}\label{allstars} $({{\cal{A}}'},{{\cal{B}}'})$  is  a $(2,n)$-\restricted $1$-cross intersecting SPS of size $m$.
\end{claim}
\begin{proof} It is clear that $({{\cal{A}}'},{{\cal{B}}'})$ is a $1$-cross intersecting SPS of size $m$. To prove that it is $(2,n)$-%%% !!! \restricted,
bounded, assume first that
$1\le i\le t$. Then
\begin{equation*}
  % \nonumber % Remove numbering (before each equation)
   |B_i'|=%%%|B_i'\cap S|+|B_i'\cap (V({\cal{A}'})\setminus S)|=
   |C_i|+|B_1\setminus V(T))| %%% \\
   \le \left\lceil{t/2}\right\rceil+\left( |B_1|-\left\lceil{t/2}\right\rceil \right)=|B_1|\leq n.
  \end{equation*}
If $i>t$, we have
$$|B_i'|=%%% |B_i'\cap S|+|B_i'\cap (V({\cal{A}'})\setminus S)|=
          2+|B_i\setminus V(T)|\le |B_i\cap V(T)|+|B_i\setminus V(T)|\leq n,$$
where the inequality $2\le |B_i\cap V(T)|$ holds because $T$ is not a star.
\end{proof}
%\qed

Applying Claim \ref{allstars} repeatedly, we may assume that all components of $\AA$ are stars, $S_1,\dots,S_k$,
where $S_i$ has $t_i\ge 1$ edges. For any edge $A_j\in S_i$, $n\ge |B_j|=t_i-1+k-1$. Adding these inequalities for $i=1,\dots,k$, we obtain that $kn\ge m-2k+k^2$ which leads to $k(n+2-k)\ge m$.
% Using the geometric - arithmetic mean inequality, we get
Hence
$$m\leq  k(n+2-k)\leq   \Bigl(\left\lfloor{n\over 2}\right\rfloor+1\Bigr)\Bigl(\left\lceil{n\over 2}\right\rceil+1\Bigr).
%\ge {(n+2)\over 2}^2\ge \ge m.
$$
Taking together the bounds for odd cycles and acyclic graphs, we get that
$$m\le \max \left\{2n+1,\; \Bigl(\left\lfloor{n\over 2}\right\rfloor+1\Bigr)\Bigl(\left\lceil{n\over 2}\right\rceil+1\Bigr)\right\}.$$
For $n=2,3$ the first term is larger, for $n=4$ they are equal, and for $n\ge5$
the second term takes over. This proves the upper bound for $m$.

The matching lower bound for $n\ge 4$ comes from Proposition~\ref{mult}
applied to the standard construction with values $(1,\lceil{n\over2}\rceil)$
and $(1,\lfloor{n\over2}\rfloor)$. For $n=2$ the hypergraph $\WWW(2,2)$ works
(defined in Subsection \ref{expsize}).
For $n=3$ we can define $\WWW(2,3)$ as the pairs
$\bigl(\{i, i\!+\!1\}, \{i\!+\!2, i\!+\!4, i\!+\!6\}\bigr)$ taken$\pmod 7$.
\end{proof}

\smallskip

\section{1-cross intersecting linear SPS -- upper bounds}\label{linear}
For $v\in V$ we
denote by $d_\AA(v),\; d_\BB(v),\; d_\HH(v)$ the degree of $v$ in
the hypergraphs $\AA, \BB, \HH$,  respectively.

\propujup*

%\noindent {\bf Proof of Proposition \ref{ujup}. }
\begin{proof}
Our first observation here is the following.
% Suppose that
%  $\SS$ is an $(n,n)$-\restricted  cross intersecting SPS of size $m\ge n^2+n+2$,
%  where $\AA$ is linear.
\begin{claim} \label{degcl} $d_\AA(v)\le n+1$ for each vertex $v$.
\end{claim}
\begin{proof}  Suppose $v\in A_1\cap\ldots\cap A_{n+2}$.
Then $v\not\in B_i$ for $i\le n+2$ and in
  $\bigcup_{i=1}^{n+2}A_i\setminus\{v\}$
  the sets $A_i'=A_i\setminus\{v\}$ are pairwise disjoint.
  The set $B_{n+2}$ must intersect each $A'_1,\ldots,A'_{n+1}$ which is impossible.
\end{proof}
Consider $B_{n^2+n+2}$. For $1\le i\le n^2+n+1$ the set $A_i$
  intersects $B_{n^2+n+2}$, so there is a vertex $v\in B_{n^2+n+2}$ with
  $d_A(v)>n+1$, a contradiction.
\end{proof}

\thmgup*

%\noindent {\bf Proof of Theorem \ref{gup}. }
\begin{proof}
Suppose that $(\AA,\BB)$ is an $(n,n)$-\restricted  % XXX ??? type 1
 1-cross intersecting SPS of size $m$ such that both $\AA$ and $\BB$ are linear hypergraphs.
We have  $m_2(\lii, \lii, 1)\leq 5$ by Theorem~\ref{2n} so we may suppose that $n\geq 3$.
If $m\leq 2n+2$ then there is nothing to prove, so from now on, we may suppose that $m\geq 2n+3$.

We claim that for every $v\in V$,
% \beq\label{eq3}
  $d_\AA(v)$, $d_\BB(v)\le n$.
  % \eeq
Indeed, $d_\AA(v)\leq n+1$ (and in the same way $d_\BB(v)\le n+1$) is obvious from Claim~\ref{degcl}.
Suppose $d_\AA(v)\ge n+1$, say $v\in A_1\cap \dots \cap A_{n+1}$ then $m>2n+2\geq d_\AA(v)+d_\BB(v)$ so there is a pair $A_i,B_i$ with $i>n+1$ such that $v\notin A_i\cup B_i$. Thus $B_i$ cannot intersect all $A_j$-s containing $v$, proving the claim.

Since $\SS$ is 1-cross intersecting we have $\sum_{v\in B_i} d_\AA(v)=m-1$ for each $B_i$.
Adding up these $m$ equations we get
\beq\label{eq4}
\sum_v  d_\AA(v)d_\BB(v)=m^2-m.
  \eeq

Let $\AA_i$ be the set of $A_j$-s that intersect $A_i$ and different from $A_i$.
Our crucial observation is that if $A_i$ and $A_j$ do not intersect then
\beq\label{eq5}
   |\AA_i|+|\AA_j|\leq n^2.
  \eeq
Indeed, the left hand side of (\ref{eq5}) equals to
$\sum_{\ell: \ell\neq i,j}  |A_\ell\cap (A_i\cup A_j)|$.  For two disjoint sets $X,Y$ we say that a pair $(x,y)$ {\em joins} $X,Y$ if $x\in X,y\in Y$.
For $\ell \neq i,j$ we have $|A_\ell\cap (A_i\cup A_j)|\leq 2$.
In case of  $|A_\ell\cap (A_i\cup A_j)|= 2$ we select two pairs $(x,y),(x',y')$ joining $A_i,A_j$, namely
 $(x,y)=A_\ell\cap (A_i\cup A_j)$ and $(x',y')=B_\ell\cap (A_i\cup A_j)$.
In case of  $|A_\ell\cap (A_i\cup A_j)|= 1$ we select one pair $(x,y)$ joining  $A_i,A_j$, namely
 $(x,y)=B_\ell\cap (A_i\cup A_j)$.
These pairs are distinct because
$$|A_{\ell}\cap B_{\ell'}|\le 1,|A_{\ell}\cap A_{\ell'}|\le 1, |B_{\ell}\cap B_{\ell'}|\le 1.
  $$
Since there are $n^2$ pairs between $A_i$ and $A_j$ we obtain that
  $\sum_{\ell: \ell\neq i,j}  |A_\ell\cap (A_i\cup A_j)|\leq n^2$, completing the proof of~(\ref{eq5}).
% This implies~\eqref{eq5}.

If $A_i\cap A_j=\{v\}$  then %, with a slightly more complicated argument,
   we will prove that
\beq\label{eq6}
   |\AA_i|+|\AA_j|\leq (n-1)^2+d_\AA(v)+d_\BB(v) \leq n^2+1.
  \eeq
Indeed, as before,
\[ |\AA_i|+|\AA_j|=\sum_{\ell: \ell\neq i}  |A_\ell\cap A_i| +\sum_{\ell: \ell\neq j}  |A_\ell\cap A_j|.
  \]
For every  $\ell \neq i,j$ we select (at most) two pairs joining $A_i\setminus \{ v\}$ to $A_j\setminus \{ v\}$, namely
$A_\ell \cap ((A_i\setminus \{ v\})\cup (A_j\setminus \{ v\}))$ and
 $B_\ell \cap ((A_i\setminus \{ v\})\cup (A_j\setminus \{ v\}))$.
In this way we selected at least $ |A_\ell\cap A_i|+|A_\ell\cap A_j|$ distinct pairs except if $v\in A_\ell \cup B_\ell$.
In the latter case we still have selected at least $ |A_\ell\cap A_i|+|A_\ell\cap A_j|-1$ pairs.
So the left hand side of~\eqref{eq6} is at most the number of pairs joining $A_i\setminus \{ v\}$ to $A_j\setminus \{ v\}$ plus
 $d_\AA(v)+d_\BB(v)$. This completes the  proof of~\eqref{eq6}.

Next we prove that
\beq\label{eq7}    \sum_{v\in V} d_\AA(v)^2 \leq m\left(\frac{1}{2}n^2 + n + \frac{1}{2}\right).
  \eeq
Add up inequalities~\eqref{eq5} and~\eqref{eq6} for all $1\leq i<j\leq m$
\[
  \frac{1}{m-1}\sum_{1\leq i<j\leq m} |\AA_i|+|\AA_j| \leq  \frac{1}{m-1}{m \choose 2} (n^2+1)= m\left(\frac{1}{2}n^2 + \frac{1}{2}\right).
\]
Here the left hand side is
\[
    \sum_{1\leq i\leq m} |\AA_i|=  \sum_{1\leq i\leq m}\left( \sum_{v\in A_i} (d_\AA(v)\!-\!1)\right)
    =\sum_{v\in V} \left(d_\AA(v)^2-d_\AA(v)\right)= \left(\sum_{v\in V} d_\AA(v)^2\right) -mn.
\]
  The last two displayed formulas yield~\eqref{eq7} and equality can hold only if ~\eqref{eq5} was not used. Note that similar upper bound must hold for $ \sum_{v\in V} d_\BB(v)^2 $, too.

Apply~\eqref{eq7} to $\AA$ and to $\BB$ and subtract the double of~\eqref{eq4}. We obtain
\begin{multline*}
0\leq \sum_{v\in V} (d_\AA(v)-d_\BB(v))^2= \sum_{v} d_\AA(v)^2 + \sum_{v} d_\AA(v)^2 -2\sum_v  d_\AA(v)d_\BB(v)
\\
\leq 2m\left( \frac{1}{2}n^2 + n + \frac{1}{2}\right) -2m(m-1)= 2m \left( \frac{1}{2}n^2 + n + \frac{3}{2}-m\right).
   \end{multline*}
This implies $m\leq \frac{1}{2}n^2 + n + \frac{3}{2} $.
As a last step we show that this inequality is strict completing the proof of the upper bound on $m$.
Indeed, equality can hold only if~\eqref{eq5} was never used to $\AA$ neither to $\BB$.
This implies that $\AA$ and $\BB$ are 1-intersecting and because of (\ref{eq6}) there exists a $v$ with $d_\AA(v)=d_\BB(v)=n$.
%%% But this easily leads to a contradiction. Indeed, if
Suppose
$$v\in A_1\cap \dots \cap A_n\cap B_{n+1}\cap \dots\cap B_{2n}.$$
Then
$A_{n+1}\cap B_{n+2}=\emptyset$ because  $A_{n+1}\cap B_i$, $B_{n+2}\cap A_i$ are nonempty for $i=1,\dots,n$.
This contradicts the $1$-intersection property.
\end{proof}

\thmhup*

%\noindent {\bf Proof of Theorem \ref{hup}. }
\begin{proof}
% Suppose that $\SS$ is an $(n,n)$-\restricted 1-cross intersecting SPS of size $m$
%   such that both $\AA$ and $\BB$ are 1-intersecting, and recall
Recall  that $\HH=\AA\cup \BB$.
First, consider the case when there exists a vertex $v$ with $d_\HH(v)\geq n+1$, say $v\in A_i\cup B_i$ for $i\in \{1, 2, \dots, n+1 \}$.
Then one of the members of $\{ A_{n+2}, B_{n+2}\} $ does not cover $v$, say, $v\notin A_{n+2}$. Then $A_{n+2}$ cannot intersect all members of $\{ A_i, B_i\}_{1\leq i\leq n+1}$ containing $v$, a contradiction. So in this case $m=n+1$ and we are done.

From now on, we may suppose that $m> n+1$, and $d_\HH(v)\leq n$ for all $v\in V $.
Since only $B_1$ is disjoint from $A_1$ we get
\[ 2m=|\HH|= 2+ \sum_{v\in A_1} (d_\HH(v)-1)\leq 2+ n(n-1).
\]
and we conclude that $m\le {n\choose 2}+1$. If $n\ge 4$ and equality holds, then all vertices of $A_1$ (and of all other hyperedges) must have degree $n$. \end{proof}

\section{Constructing cross intersecting linear \\
  hypergraphs}\label{lboundlin}

Here we give constructions of large cross intersecting SPS-s such that $\AA$ is an intersecting
linear hypergraph.
Constructions~\ref{lboundlin}.1 and \ref{lboundlin}.2 show that
%\begin{equation}
  \begin{gather} n^2-o(n^2)\le m_n(\inte, \inte, *), \label{eq9}
   \\n^2-o(n^2)\le m_n(\inte, *, 1).\label{eq10}
  \end{gather}
%\end{equation}
Since the right hand sides of these inequalities are bounded above by
$m_n(\lii, *, *)$ (which is at most $n^2+n+1$), Proposition~\ref{ujup} is asymptotically the best possible.
Construction~\ref{lboundlin}.3 shows that
\begin{equation}\label{eq11}
  \frac{1}{2}n^2-o(n^2)\le m_n(\inte, \inte, 1). % \le m_n(\lii,\lii,1).
  \end{equation}
Hence Theorems~\ref{gup} and~\ref{hup} are also asymptotically the best possible.

%  The outline of the constructions is the following.
We use that the function $m_n(I_A, I_B, I_{\rm cross})$ is monotone increasing in $n$ so we have to make constructions only for a dense set of special values of $n$.

Beyond Bertrand's postulate (for each real $x>1$ there always exists a prime $p$ with $x< p< 2x$)
we need Hoheisel's theorem~\cite{HOH} about the density of primes:
There are constants $x_0$ and $0.5\le \alpha<1$ such that
    for all $x\ge x_0$ the interval
\begin{equation}\label{HOHcor}
   [x-x^{\alpha}, x] \,\,\, \textrm{contains a prime number}.
\end{equation}
The currently known best $\alpha$ is $0.525$ by Baker, Harman and Pintz~\cite{BHP}.

\subsection{Building blocks: double stars and affine planes}\label{ss41}
The vertex set of a {\em double star of size $s$} consists of
  $\{v_{i,j}\;|\; 1\le i,\; j\le s,\; i\ne j\}$ and two
  additional special vertices $w_a$ and $w_b$. Define for $i=1,\ldots,s$ sets
  $A_i=\{w_a\}\cup\{v_{i,j}\;|\; 1\le j\le s,\; j\ne i\}$ and
  $B_i=\{w_b\}\cup\{v_{j,i}\;|\; 1\le j\le s,\; j\ne i\}$.
Then $(\AA,\BB)$ is a %type 2
  1-cross intersecting
SPS of size $s$ containing $s$-element sets
 such that both $\AA$ and $\BB$ are 1-intersecting.
 The double star shows that $m_n(\inte, \inte, 1)\geq n$ for all $n$
 (consequently, $m_n(\inte, \inte, *)\geq n$ and $m_n(\inte, *, 1)\geq n$).

The affine plane AG$(2,q)=(P, \cL)$ is a $q$-uniform hypergraph
 with a $q^2$ element vertex set $P$, such that each edge $L\in \cL$
 (called {\em line}) has  $q$ vertices ({\em points}),
 and $\cL$ can be split into $q+1$ parts
$\cL =\cL_1\cup \cL_2\cup \dots \cup \cL_{q+1}$
 (directions or parallel classes of lines)
 such that each parallel class
contains $q$ lines, $\cL_\delta= \{L_{1, \delta}, \dots, L_{q, \delta}\}$,
the members of a parallel class are pairwise disjoint,
but two lines from distinct classes always meet in a single point.
It is known that an AG$(2,q)$ exists if $q$ is prime. % or a power of a prime.

In the next subsection we give three different (but similar) constructions to
prove the lower bounds~\eqref{eq9}--\eqref{eq11}. Each construction will use
an associated Extension \emph{twice},
where an Extension starts with a weaker construction of
the same type and combine it with AG$(2,q)$ for getting a stronger construction.
In the following $p$ and $q$ will always denote odd primes.

\subsection{Extensions of the affine plane}\label{ss42}

\medskip
\noindent {\bf Extension I. }
Let $( \AA', \BB' )$ be a  cross-intersecting SPS of size at least $q$.
For each $1\leq \delta\leq q+1$ take a new copy of  $( \AA', \BB' )$
 so that the ground sets of the $q+1$ copies are pairwise disjoint and also disjoint from  AG$(2,q)$.
For $i=1,\ldots,q$ let $(A'_{i,\delta}, B'_{i,\delta})$ be the disjoint pairs in the $\delta$th copy.

Define $\CC_1(q, \AA')$ by the $q^2+q$ sets
$A_{i,\delta}:=L_{i,\delta}\cup A'_{i,\delta}$, and define
$\CC_1(q, \BB')$ by the $q^2+q$ sets
$B_{i,\delta}:=L_{i+1,\delta}\cup B'_{i,\delta}$, here $L_{q+2, \delta}:= L_{1, \delta}$.

\begin{claim}\label{cl1}
$(\CC_1(q, \AA'), \CC_1(q,\BB'))$ is a cross-intersecting SPS. If
 $\AA'$ and $\BB'$ are 1-intersecting hypergraphs, then so do $\CC_1(q, \AA')$ and  $\CC_1(q,\BB')$.
\end{claim}
\begin{proof}
Indeed, $A_{i,\delta}\cap B_{j,\gamma}=
(L_{i,\delta}\cap L_{j+1, \gamma}) \cup (A'_{i,\delta}\cap B'_{j,\gamma})$.
This is the singleton $L_{i,\delta}\cap L_{j+1,\gamma}$ for $\delta\ne\gamma$,
 it contains the nonempty set $A'_{i,\delta}\cap B'_{j,\delta}$ for $\delta =\gamma$ and $i\neq j$,
 and it is empty for $({i,\delta})=(j,\gamma)$.

In the case $\AA'$ is 1-intersecting and $({i,\delta})\neq (j,\gamma)$  we get that $A_{i,\delta}\cap A_{j,\gamma}=
(L_{i,\delta}\cap L_{j, \gamma}) \cup (A'_{i,\delta}\cap A'_{j,\gamma})$, a singleton.
\end{proof}

%% \medskip
%%  \noindent{\em Construction~\ref{lboundlin}.1, showing $m_n(\inte, \inte, *)\ge n^2- 10 n^{1+\alpha}$.} \newline
\begin{constr}
We prove \eqref{eq9}, i.e., $m_n(\inte, \inte, *)\ge n^2- 10 n^{1+\alpha}\ge n^2-o(n^2)$.

Claim~\ref{cl1} implies that whenever $q$ is an odd prime and
$m_s(\inte, \inte, *)\geq q$ then
\begin{equation}\label{eq13}
 m_{q+s}(\inte, \inte, *) \geq q^2+q.
  \end{equation}
Since $m_{s}(\inte, \inte, *) \geq s$ by the double star, apply~\eqref{eq13} for $(q,s)=(p,p)$.
We get  $m_{2p}(\inte, \inte, *)\ge p^2+p$ for all primes $p>2$.

Suppose $n>2x_0$.  There is a prime $q$ between
$n-5n^\alpha$ and $n-4n^\alpha$ by~\eqref{HOHcor} and  there is another prime $p$ between $n^\alpha$ and $2n^\alpha$.
Since
  $m_{2p}(\inte, \inte, *)\geq p^2+p> n^{2\alpha}>n > q$ one can apply~\eqref{eq13} with $s:=2p$
\begin{equation*} %\label{eq14}
   m_{n}(\inte, \inte, *)\geq  m_{q+2p}(\inte, \inte, *)
     \geq q^2+q >  n^2- 10 n^{1+\alpha}.
  \end{equation*}
\end{constr}

Note that $|A_{i,\delta}\cap B_{j,\gamma}|$ can be as large as $q+1$ (for $i=j+1$).

\medskip

Next  we prove~\eqref{eq10} and~\eqref{eq11}. The proofs are rather similar to
the one presented above, so we leave out most of the details.

\medskip

\noindent {\bf Extension II. }
Let $( \AA', \BB' )$ be a  1-cross-intersecting SPS of size at least $q-1$.
For each $1\leq \delta\leq q+1$ take a new copy of  $( \AA', \BB' )$
 so that the ground sets of the $q+1$ copies are pairwise disjoint and also disjoint from  AG$(2,q)$.
For $i=1,\ldots,q-1$ let $(A'_{i,\delta}, B'_{i,\delta})$ be the disjoint pairs in the $\delta$th copy.

Define $\CC_2(q, \AA')$ by the $q^2-1$ sets
$A_{i,\delta}:=L_{i,\delta}\cup A'_{i,\delta}$, and define
$\CC_2(q, \BB')$ by the $q^2-1$ sets
$B_{i,\delta}:=L_{q,\delta}\cup B'_{i,\delta}$.

\begin{claim}\label{cl2}
$(\CC_2(q, \AA'), \CC_2(q,\BB'))$ is a 1-cross-intersecting SPS. If
 $\AA'$ is a 1-intersecting hypergraph, then so does $\CC_2(q, \AA')$. \qed
\end{claim}

\begin{constr}
We prove \eqref{eq10}, i.e., $m_n(\inte, *, 1)\ge n^2- o(n)$.

Claim~\ref{cl2} implies that whenever $q$ is an odd prime and
$m_s(\inte, *, 1)\geq q-1$ then
\begin{equation}\label{eq15}
 m_{q+s}(\inte,  *, 1) \geq q^2-1.
  \end{equation}
Since $m_{s}(\inte, *, 1) \geq s$ by the double star, apply~\eqref{eq15} for  $(q,s)=(p,p-1)$.
We get  $m_{2p-1}(\inte, *,1)\ge p^2-1$ for all primes $p>2$.

There is a prime $q$ between
$n-5n^\alpha$ and $n-4n^\alpha$ and  there is another prime $p$ between $n^\alpha$ and $2n^\alpha$.
Since
  $m_{2p-1}(\inte, *, 1)\geq p^2-1 > n^{2\alpha}-1 \geq n > q$ one can apply~\eqref{eq15} with $s:=2p-1$
\begin{equation*} % \label{eq16}
   m_{n}(\inte,  *,1)\geq  m_{q+2p-1}(\inte,  *,1)
     \geq q^2-1 >  n^2- 10 n^{1+\alpha}.
  \end{equation*}
\end{constr}

Note that $\CC_2(q, \BB')$ is \emph{not} linear.

\medskip
\noindent {\bf Extension III. }
Let $( \AA', \BB' )$ be a  1-cross-intersecting SPS of size at least $(q-1)/2$.
For each $1\leq \delta\leq q+1$ take a new copy of  $( \AA', \BB' )$
 so that the ground sets of the $q+1$ copies are pairwise disjoint and also disjoint from  AG$(2,q)$.
For $i=1,\ldots,(q-1)/2$ let $(A'_{i,\delta}, B'_{i,\delta})$ be the disjoint pairs in the $\delta$th copy.

Define $\CC_3(q, \AA')$ by the $(q^2-1)/2$ sets
$A_{i,\delta}:=L_{i,\delta}\cup A'_{i,\delta}$, and define
$\CC_3(q, \BB')$ by the $(q^2-1)/2$ sets
$B_{i,\delta}:=L_{i+(q-1)/2,\delta}\cup B'_{i,\delta}$.

\begin{claim}\label{cl3}
  $(\CC_3(q, \AA'), \CC_3(q,\BB'))$ is a 1-cross-intersecting SPS.  If
 $\AA'$ and $\BB'$ are 1-intersecting hypergraphs, then so do $\CC_3(q, \AA')$ and  $\CC_3(q,\BB')$.\qed
\end{claim}

\begin{constr}
We prove \eqref{eq11}, i.e., $m_n(\inte, \inte, 1)\ge \frac{1}{2}n^2-o(n^2)$.

Claim~\ref{cl3} implies that whenever $q$ is an odd prime and
$m_s(\inte, \inte, 1)\geq (q-1)/2$ then
\begin{equation}\label{eq17}
 m_{q+s}(\inte,  \inte, 1) \geq (q^2-1)/2.
  \end{equation}
Since $m_{s}(\inte, \inte, 1) \geq s$
 by the double star, apply~\eqref{eq17} for  $(q,s)=(p,(p-1)/2)$.  We get  $m_{(3p-1)/2}(\inte, \inte,1)\ge (p^2-1)/2$ for all primes $p>2$.

There is a prime $q$ between
$n-5n^\alpha$ and $n-4n^\alpha$ and  there is another prime $p$ between $n^\alpha$ and $2n^\alpha$.
Since
  $m_{(3p-1)/2}(\inte,  \inte, 1)\geq (p^2-1)/2 > n^{2\alpha}/2 \geq n > q$ one can apply~\eqref{eq17} with $s:=(3p-1)/2$
\begin{equation*} % \label{eq18}
   m_{n}(\inte,  \inte,1)\geq  m_{q+(3p-1)/2}(\inte,  \inte,1)
     \geq \frac{1}{2}(q^2-1) >   \frac{1}{2}n^2- 5 n^{1+\alpha}.
  \end{equation*}

\end{constr}

\section{Conjectures, open problems}

We conjectured~\cite{FGK-arxiv} that there exists a positive $\varepsilon$ such that
  $m_n(*, *, 1)\le(1-\varepsilon){2n\choose n}$ for every $n\ge 2$.
This was proved by Holzman~\cite{Hol} in the following stronger form. If $a, b\geq 2$, then $m(a, b,1)\leq (29/30) {a+b\choose a}$.
More recently Kostochka, McCourt, and Nahvi~\cite{KMN} showed that the factor $29/30$ in this bound can be replaced by $5/6$, which is the best possible since $m(2,2,1)=5$.

Although Constructions~\ref{lboundlin}.1, and~\ref{lboundlin}.3 together with Proposition~\ref{ujup} and Theorem~\ref{gup} show that
$$\lim_{n\to \infty} \frac{m_n(\inte, \inte, 1)}{m_n(\inte, \inte,
  *)}=\lim_{n\to \infty} \frac{m_n(\lii, \lii, 1)}{m_n(\lii, \lii, *)}=\frac{1}{2},$$
we strongly believe that the following % stronger conjecture
 is also true.

\begin{conj}\label{conj_little-o}
   $$\lim_{n\to \infty} \frac{m_n(*, *, 1)}{m_n(*, *, *)}=0.$$
\end{conj}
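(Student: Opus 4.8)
The plan is to reduce Conjecture~\ref{conj_little-o} to an exponential improvement of the Bollob\'as bound \eqref{boll} and then attack that improvement algebraically. First I would record that Proposition~\ref{mult}, applied with $a_1=b_1=n_1$ and $a_2=b_2=n_2$, yields the super-multiplicativity $m_{n_1+n_2}(*,*,1)\ge m_{n_1}(*,*,1)\cdot m_{n_2}(*,*,1)$, so by Fekete's lemma the limit $\gamma:=\lim_{n\to\infty} m_n(*,*,1)^{1/n}$ exists and equals $\sup_n m_n(*,*,1)^{1/n}$; Corollary~\ref{fexp} and \eqref{boll} give $\sqrt5\le\gamma\le 4$. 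Since $\binom{2n}{n}^{1/n}\to 4$, the conjecture follows at once from $\gamma<4$, and in fact it suffices to improve the Bollob\'as upper bound to $m_n(*,*,1)\le \binom{2n}{n}/\omega(n)$ for some $\omega(n)\to\infty$, so the target is strictly weaker than improving the exponential base, while being strong enough to settle Conjecture~\ref{conj_little-o} and the large-$n$ form of Conjecture~\ref{conj_lin}.

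The core of the plan is to exploit the \emph{exact} value $|A_i\cap B_j|=1$ rather than mere non-emptiness, which is all the classical proof uses. I would start from the exterior-algebra proof of \eqref{boll}: fix generic vectors $x_v\in\RR^{2n}$, pad each $A_i,B_i$ to size $n$ by dummy vertices, and set $\alpha_i=\bigwedge_{v\in A_i}x_v$ and $\beta_i=\bigwedge_{v\in B_i}x_v$ in $\bigwedge^n\RR^{2n}$. Non-emptiness of $A_i\cap B_j$ forces $\alpha_i\wedge\beta_j=0$ for $i\ne j$, while $\alpha_i\wedge\beta_i\ne 0$, whence the $\alpha_i$ are independent and $m\le\binom{2n}{n}$. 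The extra hypothesis says this vanishing is \emph{first order}: $A_i$ and $B_j$ meet in a single coordinate, so $\alpha_i\wedge\beta_j$ vanishes to order exactly one in the shared vector $x_{c_{ij}}$, where $c_{ij}$ is the common point. My hope is to convert this into a genuine codimension, either by forcing the $\alpha_i$ into a proper, exponentially thinner subspace of $\bigwedge^n\RR^{2n}$, or by showing that the first-order data (the $\partial(\alpha_i\wedge\beta_j)/\partial x_{c_{ij}}$ at $c_{ij}$) yield additional independent linear constraints beyond plain independence.

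In parallel I would keep the identity \eqref{eq4}, namely $\sum_v d_\AA(v)d_\BB(v)=m^2-m$, which holds for \emph{every} $1$-cross intersecting SPS since the cross-intersection matrix is exactly $J-I$; together with $\sum_v d_\AA(v)\le mn$ and $\sum_v d_\BB(v)\le mn$ this pins down the degree distribution and is precisely the engine behind the sharp bounds of Theorems~\ref{gup} and~\ref{hup} in the linear case. Alternatively, via Theorem~\ref{trans} one can phrase everything as a bounded-width biclique partition $J_m-I_m=\sum_k \mathbf 1_{X_k}\mathbf 1_{Y_k}^{\top}$ and attempt a Graham--Pollak-type rank/trace estimate that genuinely incorporates the width bounds $|\{k:i\in X_k\}|,\,|\{k:i\in Y_k\}|\le n$ rather than only the rank of $J_m-I_m$.

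The main obstacle is clear and, I expect, serious: in the \emph{general} (non-linear) case the degrees $d_\AA(v),d_\BB(v)$ are unbounded, so the degree-based arguments that succeed for linear hypergraphs collapse, and no present method distinguishes ``$=1$'' from ``$\ge 1$'' strongly enough to move the exponential base below $4$. A realistic route would be a two-phase cleaning argument: first use \eqref{eq4} to show that vertices of very high degree are few and can be removed or down-weighted at the cost of only a sub-exponential factor, and then apply the refined first-order exterior-algebra bound to the bounded-degree remainder, where the single-point intersections should genuinely cut the dimension. Making either the cleaning step or the claimed codimension gain quantitative enough to beat $\binom{2n}{n}$ by a growing factor is, in my view, the crux, and is why this statement remains a conjecture.
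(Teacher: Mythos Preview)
The statement you are addressing is a \emph{conjecture} in the paper, listed in the ``Concluding conjectures'' section; the paper offers no proof, and none is currently known. So there is no ``paper's own proof'' to compare against.

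Your proposal is not a proof either, and you say as much: the final paragraph explicitly flags that neither the cleaning step nor the exterior-algebra codimension gain has been made quantitative, and that this ``is why this statement remains a conjecture.'' What you have written is a research plan. As such, the reduction in your first paragraph is correct and useful: Proposition~\ref{mult} does give super-multiplicativity of $m_n(*,*,1)$, Fekete's lemma does yield existence of $\gamma=\lim m_n(*,*,1)^{1/n}\in[\sqrt5,4]$, and since $m_n(*,*,*)=\binom{2n}{n}$ exactly, the conjecture is equivalent to $m_n(*,*,1)=o\bigl(\binom{2n}{n}\bigr)$, which is indeed strictly weaker than $\gamma<4$. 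Your observation that this would also settle Conjecture~\ref{conj_lin} for large $n$ is correct as well.

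The genuine gap is exactly where you locate it. The exterior-algebra argument you sketch only uses $A_i\cap B_j\neq\emptyset$; the ``first-order vanishing'' heuristic (that $\alpha_i\wedge\beta_j$ vanishes to order one in the shared vector) does not, as stated, produce any additional linear constraint on the $\alpha_i$'s inside $\bigwedge^n\RR^{2n}$, because the derivative you name depends on $\beta_j$ and on the unknown point $c_{ij}$, not on $\alpha_i$ alone. Likewise, identity~\eqref{eq4} is available in full generality, but without a degree bound it yields nothing beyond Cauchy--Schwarz, and the two-phase cleaning you propose has no mechanism to control how many pairs are lost when high-degree vertices are removed. These are reasonable directions to explore, but none of them currently closes, so the statement stands as a conjecture.
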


%% The proof of Proposition~\ref{ujup} gives   $m(a, b, \lii, *, *)\le b^2+b+1$. % XXX ???
%% Since the function $m$ is monotone in $a$ and $b$ (one can add a different new vertex to each member of $\AA$), we get the asymptotic
%%   $m=b^2+O(b)$ for many of the cases we have considered.
%%   We also have a lower bound  for all $a<b$ from Theorem~\ref{2n} since $m(2,
%%   b, *, *, 1)\approx b^2/4$.
%% It would be interesting to investigate all cases and the case $a<b$ as well.

We obtained some tight results for
$m(a, b, I_A, I_B, I_{\mathrm{cross}})$ in the case $a=b$ and also in the case $a=2$.  There is plenty of room for further investigations.

\bigskip

\noindent {\bf Acknowledgment. } We thank to P.\ Frankl,
T.\ H\'eger, and  D.\ P\'alv\"olgyi for fruitful discussions. Helpful remarks of a referee are also appreciated.

%\eject
\bigskip

\end{document}